\documentclass[11pt]{amsart}

\usepackage{fullpage,graphicx,amsfonts,amssymb,amsmath,amsthm}
\usepackage[all]{xy}
\usepackage[left=.8in,top=.8in,bottom=.8in,right=.8in,letterpaper]{geometry} 
\usepackage{mathtools}
\usepackage{graphicx}
\graphicspath{ {images/} }
\usepackage{enumerate}
\usepackage{hyperref} 
\usepackage{setspace}
\usepackage{amssymb} 
\usepackage{ esint }

\allowdisplaybreaks
\theoremstyle{plain} 
\newtheorem{theorem}    {Theorem}

\newtheorem{lemma}      [theorem]{Lemma}

\newtheorem{proposition}[theorem]{Proposition}

\theoremstyle{definition}

\theoremstyle{remark}

\usepackage{url}

\begin{document} 

\title{Stolarsky-Type Inequalities in a Max-Convolution Problem}

\author{Johannes Hosle \\ 06/06/2026}

\address{Department of Mathematics, Massachusetts Institute of Technology}
\email{jhosle@mit.edu}
\maketitle 

\begin{abstract}
For $m \in \mathbb{N}$, let $q_m := \frac{\log(2m+1)}{2\log(m+1)}$. The max-convolution inequality \begin{align*}
    \sum_{k=0}^{2m}\left(\max_{i+j=k} x_i y_j \right)^{q_m} &\ge \left(\sum_{i=0}^{m} x_i\right)^{q_m} \left(\sum_{j=0}^{m} y_j\right)^{q_m}
\end{align*}for arbitrary sequences $x_0 \ge x_1 \ge ... \ge x_m \ge 0, y_0 \ge y_1 \ge ... \ge y_m \ge 0$ implies an affirmative answer to a question of Bourgain, Dilworth, Ford, Konyagin, and Kutzarova \cite{BDFKK} on the sizes of sumsets in product sets. This inequality was proven for $m = 2$ by Becker, Ivanisvili, Krachun, and Madrid \cite{BIKM} by reducing the general case to the geometric block case via a max-tie analysis. We prove the geometric block case $x = (1, t, ..., t^{r}, 0, ..., 0)$ and $y = (1, t, ..., t^s, 0, ..., 0)$, $t \in [0, 1]$, for all $m \in \mathbb{N}$ via a comparison of Stolarsky means. Some perturbations are also verified. Finally, we prove the above inequality when one sequence has only two non-zero terms.
\end{abstract}

\section{Introduction}

Let $p_{m}$ denote the largest power such that the inequality \begin{align*}
    \left|A + B\right| &\ge (|A||B|)^{p_{m}}
\end{align*} holds for all subsets $A, B \subset \mathcal{C}_{m}^d := \{0, 1, 2, ..., m\}^d \subset \mathbb{Z}^d$, for all $d\ge 1$ where $m$ is a fixed positive integer. Choosing $A = B = \mathcal{C}_{m}^d$, we obtain \begin{align}\label{pmUpper}
    \frac{\log(2m + 1)}{2\log(m+1)} &\ge p_{m}.
\end{align}

For $m\ge 1,$ let $\tau_m$ be the solution of the equation \begin{align*}
    1 + (m(m+1))^{\tau_m} &= (m+1)^{2\tau_m}.
\end{align*} Bourgain, Dilworth, Ford, Konyagin, and Kutzarova \cite{BDFKK} proved that $p_{m} \ge \tau_{m}$, and expressed the opinion that equality in~\eqref{pmUpper} likely holds for all $m \in \mathbb{N}$. Note that both $\tau_{m}, \frac{\log(2m+1)}{2\log(m+1)} = \frac{1}{2} + \frac{\log 2}{2\log m} + o\left(\frac{1}{\log m}\right)$ as $m \to \infty,$ so~\eqref{pmUpper} is at least asymptotically optimal. Bourgain, Dilworth, Ford, Konyagin, and Kutzarova require this bound in the additive-combinatorial component of their explicit RIP matrix construction. 

They showed that in order to prove the inequality \begin{align*}
    \left|A + B\right| &\ge (|A||B|)^p
\end{align*} for all subsets $A, B \subset \mathcal{C}_{m}^d$, for all $d\ge 1$, for a fixed $p > 0$, it suffices to prove that for any sequences $x_0 \ge x_1 \ge ... \ge x_m \ge 0, y_0 \ge y_1 \ge ... \ge y_m \ge 0$, \begin{align}\label{maxconv}
    \sum_{k=0}^{2m}\left(\max_{i+j=k} x_i y_j \right)^{p} &\ge \left(\sum_{i=0}^{m} x_i\right)^p \left(\sum_{j=0}^{m} y_j\right)^p,
\end{align} The inequality~\eqref{maxconv} was proven for $m = 2, p = \frac{\log 5}{2 \log 3}$ by Becker, Ivanisvili, Krachun, and Madrid \cite{BIKM}, thereby confirming that equality holds in~\eqref{pmUpper} for $m = 2$. The argument in their work consists of a max-tie analysis, the final step being the confirmation of~\eqref{maxconv} for the geometric block case $x = (1, t, ..., t^{r}, 0, ..., 0)$ and $y = (1, t, ..., t^s, 0, ..., 0)$. Their $m = 2$ proof of this case uses Descartes's rule of signs, and does not extend already to $m = 3$.

The main result of this note confirms the geometric block case for all $m \in \mathbb{N}$.

\begin{theorem}\label{mainblockStolarsky}
For $m \in \mathbb{N}$, define $q_m := \frac{\log(2m+1)}{2\log(m+1)}$. For any integers $1\le r, s \le m$, we have the inequality \begin{align*}
    \sum_{k=0}^{r+s}t^{kq_m} &\ge \left(\sum_{i = 0}^{r} t^i\right)^{q_m} \left(\sum_{j=0}^{s} t^j\right)^{q_m}
\end{align*} for all $t \in [0, 1].$ 
\end{theorem}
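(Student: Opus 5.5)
Write $q = q_m$. Summing the geometric series, the inequality becomes
\[
\frac{1-t^{(r+s+1)q}}{1-t^q} \ge \left(\frac{1-t^{r+1}}{1-t}\right)^q \left(\frac{1-t^{s+1}}{1-t}\right)^q .
\]
I will rewrite both sides in terms of Stolarsky means. For $x,y>0$ and $p \neq 0,1$,
\[
E_{p,1}(x,y) = \left(\frac{x^p-y^p}{p(x-y)}\right)^{1/(p-1)},
\]
and $\frac{1-t^{n}}{1-t}$ is exactly $n\,E_{n,1}(1,t)^{n-1}$. Similarly, the left side equals $(r+s+1)\,E_{(r+s+1)q,\,q}(1,t)^{(r+s)q}$, using the two-parameter mean
\[
E_{a,b}(x,y) = \left(\frac{b}{a}\cdot\frac{x^a-y^a}{x^b-y^b}\right)^{1/(a-b)}.
\]
With these substitutions the claim reads
\[
(r+s+1)\,E_{(r+s+1)q,q}^{(r+s)q} \ge \bigl((r+1)(s+1)\bigr)^q\, E_{r+1,1}^{rq}\, E_{s+1,1}^{sq},
\]
all means evaluated at $(1,t)$.

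\textbf{Step 1: reduce to the constant factors.} Stolarsky means are increasing in each parameter (Leach--Sholander). Since $q<1$, it is not immediate that $E_{(r+s+1)q,q}$ dominates the weighted geometric mean $E_{r+1,1}^{r/(r+s)}E_{s+1,1}^{s/(r+s)}$. I would try to prove exactly this dominance by comparing parameter pairs, using the Páles criterion for comparability of $E_{a,b}$ with $E_{c,d}$. The relevant quantities are the sums $a+b$ versus $c+d$ and the functions $\mu(a,b)=\frac{a-b}{\log(a/b)}$ (logarithmic-mean type conditions). Here $a+b=(r+s+2)q$ against $r+2$ or $s+2$.

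Once this dominance holds, the inequality reduces to $(r+s+1) \ge ((r+1)(s+1))^q$. This follows from $q\le q_{\max(r,s)}$, since $q_n$ is decreasing in $n$, together with the elementary fact $\log(r+s+1)/\log((r+1)(s+1)) \ge \log(2n+1)/(2\log(n+1))$ for $n=\max(r,s)$. That fact holds because the ratio is minimized on the diagonal; I would check this by concavity of $\log$.

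\textbf{Step 2: handle the endpoints directly.} If the mean dominance fails, I would instead study $F(t)=\log\mathrm{LHS}-\log\mathrm{RHS}$. We have $F(0)=0$. At $t=1$, $F(1)=\log(r+s+1)-q\log((r+1)(s+1))\ge 0$, with equality exactly when $r=s=m$. Near $t=0$, $F(t)\approx t^q - q t>0$. The idea is to interpolate using monotonicity of the mean ratio in $t$: Stolarsky means at $(1,t)$ are homogeneous, and the ratio of two of them is monotone in $t$ when their parameters are appropriately ordered. Taking logarithms, the inequality becomes
\[
(r+s)q\log E_{(r+s+1)q,q} - rq\log E_{r+1,1} - sq\log E_{s+1,1} \ge q\log((r+1)(s+1)) - \log(r+s+1).
\]
The left side tends to $0$ as $t\to1$, since all means tend to $1$. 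As $t\to 0$ it tends to $+\infty$ or to a finite limit. So it suffices to show that the left side is decreasing in $t$ on $(0,1)$, with the right side being $\le 0$ by the Step 1 computation.

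\textbf{Main obstacle.} The key difficulty is this monotonicity, equivalently that a weighted log-ratio of Stolarsky means is monotone. I would write $t=e^{-2u}$ and express $\log E_{a,b}(1,e^{-2u})$ via $\log\sinh$. Using the integral representation $\log\frac{\sinh(au)}{au}=\int$-type formulas, the derivative becomes a combination of $\coth$ terms: $a\coth(au)$ for the various parameters. Sign control then reduces to convexity or majorization of $x\mapsto x\coth(xu)$. I would try to apply Karamata's inequality with weights $(r+s)$ on the parameters $\{(r+s+1)q, q\}$ versus the parameters $\{r+1,1\}$ and $\{s+1,1\}$. This needs a majorization condition in which $q=q_m\ge 1/2$ and $q\le q_{\max(r,s)}$ enter. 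Verifying this majorization is where I expect the real work to be.
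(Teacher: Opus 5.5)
\textbf{There is a genuine gap.} Your reformulation is correct, and in the diagonal case $r=s=m$ your Step 1 is exactly the paper's argument. There the dominance $E_{(2m+1)q_m,q_m}\ge E_{m+1,1}$ is the Leach--Sholander/P\'ales comparison (the paper rescales all parameters by $1/q_m$). The sum condition $(2m+2)q_m\ge m+2$ holds, the $l$-condition holds with equality, and the constants match because $2m+1=(m+1)^{2q_m}$.

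For general $(r,s)$, however, the dominance is false. (Its right-hand side is a weighted geometric mean of two Stolarsky means, not a Stolarsky mean, so the criterion would not apply anyway.) With $t=e^{-2u}$ one has $\log E_{a,b}(1,e^{-2u})=-u+\frac{(a+b)u^2}{6}+O(u^4)$, so
\[
L:=(r+s)q\log E_{(r+s+1)q,q}-rq\log E_{r+1,1}-sq\log E_{s+1,1}=\frac{qu^2}{6}\bigl[(r+s)(r+s+2)q-r(r+2)-s(s+2)\bigr]+O(u^4).
\]
For $r=s=1$ the bracket is $8q_m-6<0$ once $m\ge 2$ (since $q_2\approx 0.7325$). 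For $(r,s)=(2,1)$ with $m=2$ it is $15q_2-11<0$. So $L<0$ just below $t=1$, and Step 1 cannot carry the general case.

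Step 2 cannot rescue this, because the monotonicity you single out as the main obstacle is false in every case. Since $E_{a,b}(1,0)=(b/a)^{1/(a-b)}$, you get $L(0^+)=q\log((r+1)(s+1))-\log(r+s+1)$. This is exactly your right-hand side, reflecting that the original inequality is an equality at $t=0$. Meanwhile $L(1^-)=0$. A decreasing $L$ would force $L(0^+)\ge 0$, hence $(r,s)=(m,m)$. But then $L$ vanishes at both ends and is positive near $t=1$ by the expansion above, so it is not monotone either. What must be shown is $L(t)\ge L(0^+)$, not $L(t)\ge 0$. That requires spending the gap between $q_m$ and $q_{\max(r,s)}$ in a $t$-dependent way, not only at $t=1$.

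The paper does this as follows, with $G_N(\tau)=\sum_{k=0}^N\tau^k$:
\begin{enumerate}
\item Prove the diagonal bound $G_{2N}(t^{q_N})\ge G_N(t)^{2q_N}$ for each $N$ with its own exponent $q_N$. This is your Step 1 with $m$ replaced by $N$.
\item Since $q_m$ is decreasing, $q_m\le q_N$ for $N\le m$. Apply the diagonal bound at $t^{\alpha}$ with $\alpha=q_m/q_N\le 1$, and use $G_N(t^\alpha)\ge G_N(t)\ge 1$ to get $G_{2N}(t^{q_m})\ge G_N(t)^{2q_m}$.
\item Log-concavity of $N\mapsto G_N(\tau)$ gives $G_{r+s}(\tau)^2\ge G_{2r}(\tau)G_{2s}(\tau)$ at $\tau=t^{q_m}$. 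This combines the two diagonal bounds into the claim.
\end{enumerate}
Your constant inequality $(r+s+1)\ge((r+1)(s+1))^{q_m}$ is precisely the $t=1$ shadow of this argument: it is $(r+s+1)^2\ge(2r+1)(2s+1)$ together with $q_m\le q_r,q_s$.
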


Our proof makes use of a comparison result for Stolarsky means.

The next two results correspond to the case $x = y = (1, t, ..., t^r, t^r u)$ and $x = y = (1, t, tu, t^2 u, ..., t^{r-1} u)$ respectively, where we use here and henceforth the convention that if the last entries of a vector all vanish, we omit them. They can be viewed as perturbed geometric block cases, and are also proven and used for $m = 2$ by Becker, Ivanisvili, Krachun, and Madrid \cite{BIKM}.

\begin{proposition}\label{pert1}
If $1\le r\le m-1$, then \begin{align*}
    \sum_{k=0}^{2r}t^{kq_m} + t^{2rq_m}(u^{q_m} + u^{2q_m}) &\ge \left(\sum_{i=0}^{r} t^i + t^r u \right)^{2q_m},
\end{align*} for all $0\le u \le t \le 1.$
\end{proposition}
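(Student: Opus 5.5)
The approach is to view the left side as a function of $u\in[0,t]$ and show that the difference
\begin{align*}
F(u) &:= \sum_{k=0}^{2r}t^{kq_m} + t^{2rq_m}(u^{q_m}+u^{2q_m}) - \Bigl(\sum_{i=0}^{r}t^i + t^r u\Bigr)^{2q_m}
\end{align*}
is nonnegative on $[0,t]$ by reducing to the two endpoints.

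\textbf{Endpoints.}
\begin{itemize}
\item At $u=0$ the inequality is exactly Theorem~\ref{mainblockStolarsky} with $r=s$, since $r\le m-1\le m$.
\item At $u=t$ the right side is $\bigl(\sum_{i=0}^{r+1}t^i\bigr)^{2q_m}$. The left side is $\sum_{k=0}^{2r}t^{kq_m}+t^{(2r+1)q_m}+t^{(2r+2)q_m}=\sum_{k=0}^{2r+2}t^{kq_m}$. This is Theorem~\ref{mainblockStolarsky} with $r=s$ replaced by $r+1\le m$, which is where the hypothesis $r\le m-1$ enters.
\end{itemize}
So both endpoints are covered by the main theorem.

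\textbf{Interior.} I would show $F$ cannot have an interior negative minimum by establishing a concavity-type property. Substitute $u=t\,v$ with $v\in[0,1]$, write $S=\sum_{i=0}^r t^i$, and set $q=q_m\in(1/2,1)$. Then
\begin{align*}
F &= C + t^{2rq}\bigl(u^{q}+u^{2q}\bigr) - (S+t^r u)^{2q}.
\end{align*}
Differentiating in $u$ gives
\begin{align*}
F'(u) &= t^{2rq}\bigl(q u^{q-1}+2q u^{2q-1}\bigr) - 2q\,t^r(S+t^ru)^{2q-1}.
\end{align*}
Dividing by $q\,t^{2rq}u^{q-1}$, the sign of $F'$ equals the sign of
\begin{align*}
G(u) &:= 1+2u^{q} - 2\,t^{r(1-2q)}\,u^{1-q}(S+t^ru)^{2q-1}.
\end{align*}
The plan is to show $G$ changes sign at most twice on $(0,t]$, with the pattern $+,-,+$ or fewer. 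Then $F$ increases, decreases, and increases, so $\min F$ is attained at $u=0$ or at a local minimum. That is not quite enough on its own. A cleaner route is to show $G$ has at most one sign change, from $+$ to $-$. This makes $F$ unimodal, increasing then decreasing, so $\min_{[0,t]}F=\min(F(0),F(t))\ge 0$.

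To get the single sign change, I would write $G(u)=1+2u^q-2H(u)$ with $H(u)=t^{r(1-2q)}u^{1-q}(S+t^ru)^{2q-1}$.
\begin{itemize}
\item Near $0$, $G\to1>0$.
\item The ratio $H(u)/(1+2u^q)$ should be increasing. Its log-derivative is $\frac{1-q}{u}+\frac{(2q-1)t^r}{S+t^ru}-\frac{2qu^{q-1}}{1+2u^q}$, and one checks positivity using $u\le t\le 1$ and $S\ge 1$.
\end{itemize}
If $F$ happens to be only of the $+,-,+$ type, the fallback is to locate the interior critical point and compare it against the endpoint values. Alternatively, one could use concavity of $u\mapsto(S+t^ru)^{2q}$ being false ($2q>1$) against the concavity of $u^q$.

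\textbf{Main obstacle.} I expect the hardest step to be verifying this monotonicity, i.e.\ positivity of that log-derivative, uniformly in $t$ and $r$. The term $\frac{1-q}{u}$ dominates for small $u$. For $u$ near $t$ one needs $\frac{1-q}{u}\ge\frac{2q u^{q-1}}{1+2u^q}$ roughly, i.e.\ $(1-q)(1+2u^q)\ge 2qu^q$. This can fail when $q$ is close to $1$ and $u$ is near $1$ (small $m$), so the middle term with $S\ge r+1$ in mind will be needed. For small $m$ (such as $m=2,3$) I would expect to supplement the argument by a direct check, as in \cite{BIKM}.
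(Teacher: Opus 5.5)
Your endpoint analysis is correct and matches the paper. The gap is the interior step, which carries the whole content of the proposition, and the route you propose for it fails.

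The claim that $u\mapsto H(u)/(1+2u^{q})$ is increasing is false in general. Multiply your log-derivative by $u$ and evaluate at $t=u=1$, where $S=r+1$:
\[
(1-q)+\frac{2q-1}{r+2}-\frac{2q}{3}.
\]
For $m=2$, $r=1$ this equals $\tfrac23-q_2\approx-0.07<0$. For $r=m-1$ it equals $1-\tfrac{5q_m}{3}+\tfrac{2q_m-1}{m+1}$, which is still negative at, e.g., $m=10$. By continuity the log-derivative is negative on a relatively open set of $(t,u)$, so this is not a boundary artifact. The failure is also not confined to $m=2,3$, so a ``direct check for small $m$'' cannot rescue it; such a check would in any case range over the continuous parameters $t,u$. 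Your fallback, comparing an interior critical point with the endpoint values, gives no mechanism for actually doing that comparison.

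The missing idea is that you do not need unimodality or any sign-change counting. At any interior critical point, use $F'(u)=0$ to eliminate the awkward term. Write $c=t^r u$ and multiply $F'(u)=0$ by $u/q$; this gives $t^{2rq}u^{q}=2c(S+c)^{2q-1}-2c^{2q}$. Substituting into $F(u)-F(0)$ yields
\[
F(u)-F(0)=2c(S+c)^{2q-1}-c^{2q}-(S+c)^{2q}+S^{2q}
=S^{2q}\Bigl[\,1-s^{2q}-(1-s)(1+s)^{2q-1}\Bigr],\qquad s=\frac{c}{S}\in[0,1],
\]
where $s\le 1$ because $c\le t^{r+1}\le 1\le S$. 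The bracket is nonnegative whenever $2q\in[1,2]$. The paper proves $1-s^{p}\ge(1-s)(1+s)^{p-1}$ by substituting $\tau=(1-s)/(1+s)$ and using concavity of $(1+\tau)^p-(1-\tau)^p-2^p\tau$ on $[0,1]$, which vanishes at both ends.

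Hence every interior critical point, in particular any interior local minimum, satisfies $F\ge F(0)\ge 0$. This disposes of even the $+,-,+$ scenario you worried about. Combined with the two endpoints, it finishes the proof uniformly in $m$, with no small-$m$ casework.
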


\begin{proposition}\label{pert2}
If $2\le r\le m$, then \begin{align*}
    1 + t^{q_m} + t^{2q_m} + u^{q_m} \sum_{k=2}^{r} t^{kq_m} + u^{2q_m} \sum_{k=r}^{2r-2} t^{kq_m} &\ge \left(1 + t + u \sum_{i=1}^{r-1} t^i \right)^{2q_m}.
\end{align*}
\end{proposition}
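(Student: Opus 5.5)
The plan is to treat the inequality as a one-parameter deformation in $u$ between two geometric block cases, both of which are covered by Theorem~\ref{mainblockStolarsky}. I take the intended range to be $t\in[0,1]$ and $0\le u\le 1$; this makes $x=(1,t,tu,\dots,t^{r-1}u)$ nonincreasing. Write $q=q_m$ and
\begin{align*}
a = 1+t,\qquad b=\sum_{i=1}^{r-1}t^i,\qquad A=1+t^q+t^{2q},\qquad B=\sum_{k=2}^{r}t^{kq},\qquad C=\sum_{k=r}^{2r-2}t^{kq}.
\end{align*}
The claim is then
\begin{align*}
F(u):=A+Bu^q+Cu^{2q}-(a+bu)^{2q}\ge 0 .
\end{align*}

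\emph{Endpoints.} At $u=0$ the claim reads $1+t^q+t^{2q}\ge(1+t)^{2q}$, which is Theorem~\ref{mainblockStolarsky} with $r=s=1$. At $u=1$ we have $a+b=\sum_{i=0}^{r-1}t^i$. Moreover $A+B+C$ contains every power $t^{kq}$ for $0\le k\le 2r-2$ at least once, with some terms repeated. Hence $F(1)\ge0$ follows from Theorem~\ref{mainblockStolarsky} with $r=s=r-1$ (for $r\ge2$), and there is even some slack.

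\emph{Interior.} I first substitute $v=u^q\in[0,1]$. The left side becomes the quadratic $A+Bv+Cv^2$, and the right side becomes $(a+bv^{1/q})^{2q}$ with $1/q\in[1,2)$. My first attempt is a Minkowski/sum-of-squares reduction. Since $q\le1$, we have $(a+bu)^q\le a^q+b^qu^q$, so it suffices to find $\alpha\ge a^q$ and $\beta\ge b^q$ with
\begin{align*}
A+Bv+Cv^2\ge(\alpha+\beta v)^2 .
\end{align*}
This needs $\alpha^2\le A$, $2\alpha\beta\le B$ and $\beta^2\le C$. The first condition is the $u=0$ case above.

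The trouble is the $v^2$ coefficient. Here $b^{2q}=t^{2q}\bigl(\sum_{i=0}^{r-2}t^i\bigr)^{2q}$, while $C=t^{rq}\sum_{k=0}^{r-2}t^{kq}$, and the condition $\beta^2\le C$ fails for larger $r$ and small $t$. So this crude splitting is too lossy. I would refine it by \emph{not} discarding the cross-term structure: keep $(a+bu)^{2q}$ intact and compare with $(\alpha+\beta v)^2+\gamma v(1-v)$, which uses the surplus in $B$ and at $u=1$.

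If that fails, I fall back on a convexity/derivative analysis in $v$. The function $G(v)=(a+bv^{1/q})^{2q}$ has $G''$ of controlled sign, and I would show that $F$ cannot have an interior negative minimum. At a critical point, $B+2Cv=G'(v)$; substituting this back reduces the claim to a one-variable inequality in $t$. I would then prove that inequality by comparing Stolarsky-type means of $1,t,\dots,t^{r-1}$, as in the proof of Theorem~\ref{mainblockStolarsky}.

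The main obstacle is this interior mixed inequality: matching the $u^{2q}$ term, which carries only the high powers $t^{r},\dots,t^{2r-2}$, against the $b^2u^2$ contribution of the right side when $t$ is small. I expect the resolution to require using the genuine size of $B$ (the $u^q$ term) to absorb that deficit.
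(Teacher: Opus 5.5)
\textbf{There is a genuine gap: the interior case is never proved.}

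Your endpoint checks are correct. $F(0)\ge 0$ is Theorem~\ref{mainblockStolarsky} with $r=s=1$. $F(1)\ge 0$ follows from Theorem~\ref{mainblockStolarsky} with $r=s=r-1$, because $A+B+C$ contains every $t^{kq}$ for $0\le k\le 2r-2$. The paper instead works on the natural range $0\le u\le t$, where the left side really is the max-convolution sum, and uses the endpoint $u=t$, which is Theorem~\ref{mainblockStolarsky} with $(r,r)$.

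Nothing in the proposal shows that $F$ has no negative interior minimum:
\begin{itemize}
\item The Minkowski splitting fails, as you note.
\item The $\gamma v(1-v)$ refinement is never specified.
\item The fallback does not work as described. Convexity of $v\mapsto(a+bv^{1/q})^{2q}$ rules out nothing, because $A+Bv+Cv^2$ is convex too.
\item Substituting the critical-point equation does not give a one-variable inequality in $t$. Eliminating $C$ yields $F=A+\tfrac12 Bu^q-a(a+bu)^{2q-1}$ at a critical point. That is still a two-variable inequality you would have to prove, and no Stolarsky comparison is identified for it.
\end{itemize}

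The missing idea is a single-sign-change argument for $F'$. Write $F'(u)=q\,u^{q-1}\bigl(\Phi(u)-\Psi(u)\bigr)$ with $\Phi(u)=B+2Cu^{q}$ and $\Psi(u)=2b\,u^{1-q}(a+bu)^{2q-1}$. Since $\Psi(0)/\Phi(0)=0$, it suffices to show that $\Psi/\Phi$ is increasing. Then $F'$ changes sign at most once, from $+$ to $-$, so $F$ is minimized at an endpoint. With $Q=bu/a$ one computes
\[
u(\log\Psi)'=1-q+(2q-1)\frac{Q}{1+Q},\qquad u(\log\Phi)'=\frac{2qX}{1+2X},\quad X=(t^{r-2}u)^{q}.
\]
The second formula uses the structural identity $C=t^{(r-2)q}B$.

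For $r\ge 3$, the bound $(1+t)t^{r-2}\le b$ gives $X\le Q^q$. The required inequality then reduces to $(1-q)+qQ>2(2q-1)Q^q$. This follows from weighted AM--GM, $(1-q)+qQ\ge Q^q$, together with $q_m\le q_3<3/4$ (Lemma~\ref{qmdecreasing}, since $m\ge r\ge 3$).

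The case $r=2$, where $X\le Q^q$ fails, is Proposition~\ref{pert1} with $r=1$.

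The identity $C=t^{(r-2)q}B$, which couples the $u^{q}$ and $u^{2q}$ coefficients, is what makes the interior tractable. Your closing remark about using the size of $B$ points in this direction but never supplies it.
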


\section{Stolarsky means comparison inequality}

We will require the notion of a Stolarsky mean $S_{p, q}(a, b)$ for $p, q \in \mathbb{R}$ and $a, b \in \mathbb{R}_{+}$. These were introduced by Stolarsky \cite{Stolarsky}. When $pq(p-q) \neq 0$ and $a\neq b$, \begin{align*}
    S_{p, q}(a, b) :=  \left(\frac{q(a^p - b^p)}{p(a^q - b^q)} \right)^{\frac{1}{p-q}}.
\end{align*}When $a = b$, $S_{p, q}(a, a) = a.$ The following comparison result for Stolarsky means was first proven by Leach and Sholander \cite{LeachSholanderII} and later by P\'ales \cite{Pales}.

\begin{theorem}\label{StolarskyComparison} (\cite{LeachSholanderII}, \cite{Pales}) The comparison inequality \begin{align*}
    S_{p, q}(a, b) &\le S_{r, s}(a, b)
\end{align*} holds for all $a, b \in \mathbb{R}_{+}$ if and only if all the following conditions are met: \begin{enumerate}
    \item $p + q \le r + s$
    \item $l(p, q) \le l(r, s)$ if $\min(p, q, r, s)\ge 0$ or $\max(p, q, r, s) \le 0$, where $l(u, v) = \begin{cases}
        \frac{u - v}{\log \frac{u}{v} } & \text{ if }uv > 0 \\
        0 &\text{ if }uv = 0.
    \end{cases}$
    \item $\mu(p, q) \le \mu(r, s)$ if $\min(p, q, r, s) < 0 < \max(p, q, r, s)$, where $\mu(u, v) = \begin{cases}
        \frac{|u| - |v|}{u - v} &\text{ if }u\neq v \\ 
        \text{sgn}(u) &\text{ if }u = v.
    \end{cases}$
    \end{enumerate}
\end{theorem}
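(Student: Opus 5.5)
Since $S_{p,q}$ is symmetric and homogeneous of degree one in $(a,b)$, I will reduce to $a=e^{\tau}$, $b=e^{-\tau}$ with $\tau>0$. Put $\varphi(z)=\log\frac{\sinh z}{z}$, which is even and smooth, with $\varphi(0)=0$. Then $a^p-b^p=2\sinh(p\tau)$, and
\begin{align*}
\log S_{p,q}(e^{\tau},e^{-\tau}) \;=\; \frac{\varphi(p\tau)-\varphi(q\tau)}{p-q} \;=\; \frac{1}{p-q}\int_q^p \tau\,\varphi'(u\tau)\,du .
\end{align*}
So $\log S_{p,q}$ is the average over $[q,p]$ of the odd function $g_\tau(u)=\tau\varphi'(u\tau)$. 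This formula also covers the degenerate cases $pq(p-q)=0$ by continuity. The comparison $S_{p,q}\le S_{r,s}$ becomes the statement that this average over $[q,p]$ is at most the average over $[s,r]$, for every $\tau>0$.

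\emph{Necessity} comes from asymptotics of these averages.
\begin{enumerate}
\item As $\tau\to 0$ we have $\varphi(z)=z^2/6+O(z^4)$. Hence the average equals $\frac{(p+q)\tau^2}{6}+O(\tau^4)$, which forces $p+q\le r+s$.
\item As $\tau\to\infty$ we have $\varphi'(z)=\operatorname{sgn}(z)-\frac1z+O(e^{-2|z|})$. The average of $\operatorname{sgn}$ over $[q,p]$ is exactly $\mu(p,q)=\frac{|p|-|q|}{p-q}$. So the leading term $\tau\mu(p,q)$ gives condition (3) whenever the parameters straddle $0$.
\item If all parameters have one sign, then $\mu\equiv\pm1$ on both sides and the leading terms cancel. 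The next term is $-\frac{1}{p-q}\int_q^p\frac{du}{u}=-\frac{1}{l(p,q)}$, which gives condition (2). The boundary cases $uv=0$, where $l=0$, need a separate treatment: there $\log$-divergent terms appear, and these match the convention $l=0$.
\end{enumerate}

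\emph{Sufficiency} is the real content. I would first record that $S_{p,q}$ is increasing in each of $p$ and $q$. This follows because the average of $g_\tau$ over an interval grows when an endpoint moves right, and that in turn holds because $g_\tau$ is increasing: $\varphi$ is convex, since $\log\frac{\sinh z}{z}$ has positive second derivative. Given this, the plan is to chain comparisons through an intermediate pair $(p',q')$ with $S_{p,q}\le S_{p',q'}\le S_{r,s}$. Here $(p',q')$ is obtained from $(p,q)$ by increasing coordinates, so the first inequality is monotonicity. It is chosen so that $p'+q'=r+s$ while preserving the relevant inequality for $l$ or $\mu$; this works because $l$ and $\mu$ are monotone in each argument in the correct direction.

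This reduces everything to the equal-sum case $p+q=r+s=2c$. There I write the intervals as $[c-\alpha,c+\alpha]$ and $[c-\beta,c+\beta]$ and must decide when the average of $g_\tau$ is increasing or decreasing in the half-width. This is a sign question about $g_\tau(c+\alpha)+g_\tau(c-\alpha)-2\,\mathrm{avg}$. It is controlled by the convexity or concavity of $g_\tau$ on the relevant range, i.e.\ by the sign of $\varphi'''$. Since $\varphi'''$ is negative on $(0,\infty)$ and $\varphi'$ is odd, this gives a clean answer when the interval lies on one side of $0$, where $l$ governs. When it straddles $0$, the answer depends on $\operatorname{sgn}(c)$, and this is what $\mu$ encodes.

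The main obstacle is this equal-sum step in the mixed-sign case. There $g_\tau$ is neither convex nor concave on the interval, so a pure Jensen/Hermite--Hadamard argument fails. I would first try the approach of P\'ales: show that the difference of the two averages, as a function of $\tau$, changes sign at most once. I would establish this by a sign-variation (Descartes-type) analysis of the difference of hyperbolic sines after clearing denominators. Once a single sign change is known, the small-$\tau$ and large-$\tau$ asymptotics computed above determine the sign everywhere, and that sign is exactly the one prescribed by conditions (1)--(3).
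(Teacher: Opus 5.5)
The paper does not prove this theorem; it quotes it from Leach--Sholander and P\'ales, so your argument has to stand on its own. Your representation of $\log S_{p,q}$ as an average of $g_\tau$ is sound, and so is the necessity part. The sufficiency argument, however, breaks at the reduction to equal sums.

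Both $l$ and $\mu$ are \emph{nondecreasing} in each argument. So raising $(p,q)$ to some $(p',q')$ with $p'+q'=r+s$ makes $l(p',q')\le l(r,s)$ (or $\mu(p',q')\le\mu(r,s)$) harder to keep, not easier. Concretely, take $(p,q)=(2,3)$ and $(r,s)=(1,10)$. Then $p+q=5\le 11$ and $l(2,3)=1/\log\frac32\approx 2.47\le l(1,10)=9/\log 10\approx 3.91$, so the theorem asserts $S_{2,3}\le S_{1,10}$. Yet every pair dominating $(2,3)$ (in either order) with sum $11$ has $l\ge l(2,9)=7/\log 4.5\approx 4.65$. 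Longer chains do not help. In the positive range a monotonicity step never lowers $\min(p,q)$. By your own necessity analysis, an equal-sum comparison $S_{c+\alpha,c-\alpha}\le S_{c+\beta,c-\beta}$ with $c>0$ forces $|\beta|\le|\alpha|$, so it cannot lower the minimum either. Since $\min(1,10)<\min(2,3)$, no chain of these two kinds of steps reaches $(1,10)$.

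What is missing is a comparison along the \emph{other} boundary curve, $l(p,q)=l(r,s)$ with $p+q\le r+s$, which forces $[q,p]\subseteq[s,r]$; in the mixed range the analogue is $\mu(p,q)=\mu(r,s)$. The sign of $\varphi'''$ says nothing here, and you need a different property of $\varphi$. One route works as follows. Equality of the $l$'s means $1/u$ has the same average over $[q,p]$ and over $[s,r]$. Because the intervals are nested, the distribution of $1/u$ for $u$ uniform on $[s,r]$ then dominates the one for $u$ uniform on $[q,p]$ in convex order. Moreover $v\mapsto g_\tau(1/v)=\tau\coth(\tau/v)-v$ is convex, with second derivative $2\tau v^{-3}\operatorname{csch}^2(z)\,(z\coth z-1)>0$, where $z=\tau/v$.

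With this step, chaining in the coordinates $(p+q,\,l)$ works. First narrow at fixed sum up to the target value of $l$ (or to the diagonal, then move up along it). Then widen at fixed $l$. In the mixed range the level sets of $\mu$ are rays, and the corresponding step is monotonicity in $\lambda$. Write $\log S_{\lambda p,\lambda q}(e^\tau,e^{-\tau})=\tau\,\Psi(\lambda\tau)$ with $\Psi(x)=\frac{1}{p-q}\int_q^p\varphi'(ux)\,du$. Then $\Psi'(x)=\frac{1}{p-q}\int_q^p u\varphi''(ux)\,du\ge 0$ when $p+q\ge 0$.

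By contrast, the step you single out as the main obstacle is easy. Set $F(\alpha)=\int_{c-\alpha}^{c+\alpha}g_\tau$. Then $F(0)=0$, and for $c\ge 0$ we have $F''(\alpha)=g_\tau'(c+\alpha)-g_\tau'(|c-\alpha|)\le 0$, since $g_\tau'$ is even and decreasing on $[0,\infty)$. So $F(\alpha)/(2\alpha)$ is nonincreasing in the half-width whether or not the interval straddles $0$, and no sign-change count is needed.
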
We remark now that when $b > a > 0$ and $t = \log \sqrt{\frac{b}{a}} \in (0, \infty)$, the Stolarsky mean can be expressed by a hyperbolic function \begin{align*}
    S_{p, q}(a, b) = \sqrt{ab} H_{p, q}(t),
\end{align*}where $H_{p, q}(t) = \left(\frac{q \sinh(pt)}{p \sinh(qt)} \right)^{\frac{1}{p-q}}.$

\section{Proof of Theorem~\ref{mainblockStolarsky}}

\begin{lemma}\label{qmdecreasing}
The exponent $q_m$ is decreasing for $m\ge 1$. 
\begin{proof} Indeed, treating $m$ as a real variable, \begin{align*}
    q_m' = \frac{\frac{2}{m+1}\log(m+1) - \frac{1}{m+1}\log(2m+1)}{2\log^2(m+1)}.
\end{align*} Thus, $q_m' < 0$ if and only if \begin{align*}
    \frac{2(m+1)}{2m+1} \log(m+1) < \log(2m+1).
\end{align*} Define $F(m) := \log(2m+1) - \frac{2(m+1)}{2m+1} \log(m+1).$ Then, \begin{equation*}
    F'(m) = \frac{2\log(m+1)}{(2m+1)^2} > 0.
\end{equation*} Since $F(1) = \log 3 - \frac{4}{3} \log 2 > 0$, $F(m) \ge 0$ for all $m\ge 1,$ completing the proof that $q_m$ is decreasing.
\end{proof}
\end{lemma}

\begin{proof}[Proof of Theorem~\ref{mainblockStolarsky}]
To begin, we consider the case $r = s = m$, so that we must prove \begin{align*}
    \sum_{k=0}^{2m} t^{kq_m} &\ge \left( \sum_{k=0}^{m} t^k\right)^{2q_m}.
\end{align*}Put $t = e^{-2u}$ for $u \ge 0$. The desired inequality is then \begin{align*}
    \sum_{k=0}^{2m}e^{-2q_m u k} &\ge \left(\sum_{k=0}^{m} e^{-2uk} \right)^{2q_m}
\end{align*} Note that by the geometric series formula, we have \begin{align*}
    \sum_{k=0}^{n-1}e^{-ku} &= e^{-\frac{(n-1)u}{2}} \frac{\sinh\left(\frac{nu}{2}\right)}{\sinh\left(\frac{u}{2}\right)}.
\end{align*} Therefore, our desired inequality is \begin{align*}
    \frac{\sinh((2m+1)q_m u)}{\sinh(q_m u)} &\ge \left(\frac{\sinh((m+1)u)}{\sinh(u)} \right)^{2q_m},
\end{align*} or equivalently, \begin{align*}
    \frac{\sinh((2m+1)q_m u)}{(2m+1)\sinh(q_m u)} &\ge \left(\frac{\sinh((m+1)u)}{(m+1)\sinh(u)} \right)^{2q_m}.
\end{align*} Rewriting in the notation of Stolarsky means, the inequality is equivalent to \begin{align*}
    H_{m+1, 1}(u)^{q_m} &\le H_{2m+1, 1}(q_m u).
\end{align*} By the elementary scaling identity \begin{align*}
    H_{\lambda a, \lambda b}(x) = H_{a, b}(\lambda x)^{\frac{1}{\lambda}},
\end{align*} it remains to show that \begin{align*}
    H_{\frac{m+1}{q_m}, \frac{1}{q_m}}(q_m u) &\le H_{2m+1, 1}(q_m u).
\end{align*}

To finish the proof, we now verify the conditions of the Stolarsky mean comparison theorem, Theorem~\ref{StolarskyComparison}, for $(p, q) = \left(\frac{m+1}{q_m}, \frac{1}{q_m}\right)$ and $(r, s) = (2m+1, 1)$. Since all four parameters are positive, the third condition does not apply. It therefore suffices to prove \begin{align}\label{sumcondition}
    \frac{m+1}{q_m} + \frac{1}{q_m} &\le 2m+2
\end{align} and \begin{align}\label{lcondition}
    l\left(\frac{m+1}{q_m}, \frac{1}{q_m}\right) &\le l\left(2m+1, 1\right).
\end{align} We begin with the first inequality. We must show that $q_m \ge \frac{m+2}{2m+2}$. Let $n = m+1$. Then, the inequality is equivalent to \begin{align*}
    n\log(2n-1) \ge (n+1)\log n.
\end{align*} Write $n\log(2n-1) = n\log n + n\log\left(2-\frac{1}{n}\right).$ It suffices to therefore prove $n\log\left(2 - \frac{1}{n}\right) \ge \log n,$ or $\left(2 - \frac{1}{n}\right)^n \ge n$. Since, for $n \ge 2$, $2 - \frac{1}{2} \ge \frac{3}{2}$ and $\left(\frac{3}{2}\right)^n \ge n$, the validity of~\eqref{sumcondition} follows.

We turn now to~\eqref{lcondition}. We have \begin{equation*}
    l\left(\frac{m+1}{q_m}, \frac{1}{q_m}\right) = \frac{\frac{m}{q_m}}{\log(m+1)} = \frac{2m}{\log(2m+1)} = l(2m+1, 1),
\end{equation*} so~\eqref{lcondition} is in fact an equality, completing the proof of this $r = s = m$ case.

We move on to the general case $1\le r, s, \le m$. Let \begin{align*}
    G_N(t) := \sum_{k=0}^{N} t^k = \frac{1 - t^{N+1}}{1 - t}.
\end{align*} We seek to prove \begin{align}\label{Gineq}
    G_{r+s}(t^{q_m}) &\ge G_{r}(t)^{q_m} G_{s}(t)^{q_m}.
\end{align} In the first part of this proof, we showed that \begin{align}\label{Gineqrsm}
    G_{2N}(t^{q_N}) &\ge G_{N}(t)^{2q_N}
\end{align} for all $N \ge 1$. 

By Lemma~\ref{qmdecreasing}, $q_m \le q_N$ for $N \le m$. Set $\alpha = \frac{q_m}{q_N} \le 1.$ Since $0 \le t\le 1, t^{\alpha} \ge t$. By~\eqref{Gineqrsm} for $t^{\alpha}$, we obtain \begin{equation*}
    G_{2N}(t^{q_m}) = G_{2N}((t^{\alpha})^{q_N}) \ge G_{N}(t^{\alpha})^{2q_N}.
\end{equation*} Since $G_N(t^{\alpha}) \ge G_N(t) \ge 1$ and recalling that $q_m \le q_N$, we obtain \begin{align}\label{GNqm}
    G_{2N}(t^{q_m}) &\ge G_{N}(t)^{2q_m}.
\end{align} Observe now that $N \to G_N(t)$ is log-concave for any fixed $t \in (0, 1)$. Indeed, we may compute \begin{equation*}
    \frac{\partial^2}{\partial N^2} \log \frac{1-t^{N+1}}{1-t} = -\frac{t^{N+1} (\log t)^2}{(1-t^{N+1})^2} < 0.
\end{equation*} Therefore, \begin{align*}
    \log G_{r+s}(t^{q_m}) &\ge \frac{1}{2}\log G_{2r}(t^{q_m}) + \frac{1}{2}\log G_{2s}(t^{q_m}),
\end{align*} or equivalently, \begin{align*}
    G_{r+s}(t^{q_m}) \ge \sqrt{G_{2r}(t^{q_m})G_{2s}(t^{q_m})}.
\end{align*} By~\eqref{GNqm}, \begin{align*}
    G_{r+s}(t^{q_m}) \ge G_{r}(t)^{q_m}G_s(t)^{q_m},
\end{align*} which is exactly the statement~\eqref{Gineq} we set out to prove.

\end{proof}

\section{Proof of perturbed block inequalities}

We begin with the following lemma.

\begin{lemma}\label{lemmaineq1}
For $p \in [1, 2]$ and $s \in [0, 1]$, we have \begin{align*}
    1 - s^p &\ge (1-s)(1+s)^{p-1}.
\end{align*}
\begin{proof}
The endpoint cases \(s=0\) and \(s=1\) are immediate. For \(s\in(0,1)\), set
\[
    t:=\frac{1-s}{1+s}\in(0,1).
\]
Equivalently,
\[
    s=\frac{1-t}{1+t}.
\]
The desired inequality is equivalent, after multiplying by \((1+t)^p\), to
\[
    (1+t)^p-(1-t)^p \ge 2^p t.
\]
Define
\[
    H(t):=(1+t)^p-(1-t)^p-2^p t,
    \qquad 0\le t\le 1.
\]
Then
\[
    H(0)=H(1)=0.
\]
Moreover,
\[
    H''(t)
    =
    p(p-1)\left((1+t)^{p-2}-(1-t)^{p-2}\right).
\]
The factor $p(p-1) \ge 0$, since $p\ge 1$. Additionally, since \(p\le 2\), we have \(p-2\le 0\). Therefore, for \(0<t<1\),
\[
    (1+t)^{p-2}\le (1-t)^{p-2}.
\]
Hence
\[
    H''(t)\le 0,
\]
and \(H\) is concave on \([0,1]\). Since a concave function lies above the chord joining its endpoint values, and since \(H(0)=H(1)=0\), we get
\[
    H(t)\ge 0
\]
for all \(t\in[0,1]\). Therefore,
\[
    (1+t)^p-(1-t)^p \ge 2^p t,
\] which is equivalent to the desired inequality.

\end{proof}
\end{lemma}

\begin{proof}[Proof of Proposition~\ref{pert1}]
Set \begin{equation*}
    A := \sum_{k=0}^{2r} t^{kq_m}, B := \sum_{i=0}^{r} t^i, 
\end{equation*} Define $c := t^r u, d:= t^r$. The desired inequality is \begin{align*}
    A + (dc)^{q_m} + c^{2q_m} &\ge (B + c)^{2q_m},
\end{align*} for $0 \le c \le t^{r+1} = dt.$ Define \begin{align*}
    H(c) = A + (dc)^{q_m} + c^{2q_m} - (B + c)^{2q_m}.
\end{align*} Note that the inequalities $H(0) \ge 0, H(dt) \ge 0$ follow from Theorem~\ref{mainblockStolarsky} for $r, s = m-1$ and $r, s = m$ respectively. It remains to rule out a negative interior minimum. Let $c \in (0, dt)$ be a critical point of $H$, that is, $H'(c) = 0.$ We have \begin{equation*}
    H'(c) = q_m  d^{q_m} c^{q_m - 1} + 2q_m c^{2q_m - 1} - 2q_m(B + c)^{2q_m-1}.
\end{equation*} Setting this to zero, we obtain \begin{align*}
    (dc)^{q_m} + 2c^{2q_m} &=  2c(B+c)^{2q_m-1}.
\end{align*} We now write \begin{align*}
    H(c) - H(0) &= (dc)^{q_m} + c^{2q_m} - (B+c)^{2q_m} + B^{2q_m} \\ &= 2c(B+c)^{2q_m-1} - c^{2q_m} - (B+c)^{2q_m} + B^{2q_m}
\end{align*} Set $s := \frac{c}{B}$. Since $c \le 1$ and $B \ge 1$, $s \in [0, 1].$ Then, \begin{align*}
    \frac{1}{B^{2q_m}}(H(c) - H(0)) &= 2s(1+s)^{2q_m - 1} - s^{2q_m} - (1+s)^{2q_m} + 1 \\ &= (s-1)(1+s)^{2q_m - 1} - s^{2q_m} + 1.
\end{align*} In order to conclude that this is nonnegative by Lemma~\ref{lemmaineq1}, it suffices to verify that $q_m \in \left[\frac{1}{2}, 1\right].$ By Lemma~\ref{qmdecreasing}, $q_m$ is decreasing for $m \ge 1$. Therefore, \begin{align*}
    1 \ge q_1 \ge q_m \ge \lim_{m\to \infty}q_m = \frac{1}{2},
\end{align*} as desired. It follows from Lemma~\ref{lemmaineq1} that $H(c) \ge H(0) \ge 0$, completing the proof.
\end{proof}

\begin{proof}[Proof of Proposition~\ref{pert2}] The case $r = 2$ is contained in Proposition~\ref{pert1}. We may therefore consider $r\ge 3.$ Set \begin{equation*}
    A := \sum_{k=2}^{r} t^{kq_m}, B := \sum_{k=r}^{2r-2} t^{kq_m}, C := \sum_{i=1}^{r-1}t^i.
\end{equation*} For fixed $t > 0$, define \begin{align*}
    H(u) &:= 1 + t^{q_m} + t^{2q_m} + A u^{q_m} + B u^{2q_m}  - \left(1 + t + Cu\right)^{2q_m}
\end{align*} for $u \in [0, t]$. Note that both the inequalities $H(0), H(t) \ge 0$ follow from Theorem~\ref{mainblockStolarsky}. It remains to rule out a negative interior minimum. A critical point $u$ of $H$ satisfies \begin{equation*}
    H'(u) = q_m A u^{q_m - 1} + 2q_m B u^{2q_m - 1} - 2q_m C(1+t+Cu)^{2q_m - 1} = 0,
\end{equation*} or \begin{align*}
    A + 2Bu^{q_m} &= 2Cu^{1-q_m}(1+t+Cu)^{2q_m - 1}.
\end{align*} Set $F(u) = A + 2Bu^{q_m}, G(u) = 2Cu^{1-q_m}(1+t+Cu)^{2q_m - 1}.$ Since $q_m < 1$, \begin{align*}
    \frac{G(0)}{F(0)} = 0.
\end{align*}Therefore, $H'(u) > 0$ near $0$. We now show that $\frac{G}{F}$ is increasing on $[0, t]$. Note that this suffices for the proof, because then $\frac{G}{F} = 1$ at at most one place, and $H'$ changes sign from positive to negative. In particular, a critical point of $H$ will not correspond to a minimum, let alone a negative interior minimum. To this end, we compute \begin{align*}
    &u \frac{d}{du} \log G(u) = 1 - q_m + (2q_m - 1) \frac{Cu}{1+t+Cu}, \\ &u \frac{d}{du}\log F(u) = \frac{2q_m Bu^{q_m}}{A + 2Bu^{q_m}}.
\end{align*}Set $Q := \frac{Cu}{1+t}$, so that \begin{align*}
    u\frac{d}{du}\log G(u) &= 1-q_m + (2q_m - 1)\frac{Q}{1+Q}.
\end{align*}

Since $B = t^{(r-2)q_m}A$, \begin{align*}
    \frac{Bu^{q_m}}{A} &= t^{(r-2)q_m} u^{q_m}.
\end{align*} Since we are assuming $r \ge 3$, \begin{equation*}
    (1+t)t^{r-2} = t^{r-2} + t^{r-1} \le t + t^2 + ... + t^{r-1} = C.
\end{equation*}Equivalently, \begin{align*}
    t^{r-2}u &\le Q.
\end{align*} Since the function $x \to \frac{2x}{1+2x}$ is increasing on $[0, \infty)$, \begin{equation*}
    u\frac{d}{du}\log F(u) \le \frac{2q_m Q^{q_m}}{1 + 2Q^{q_m}}.
\end{equation*} It remains to prove \begin{align}\label{pert2finalineq}
    1 - q_m + (2q_m - 1) \frac{Q}{1+Q} - \frac{2q_m Q^{q_m}}{1+2Q^{q_m}} &> 0.
\end{align}Clearing denominators, the inequality is equivalent to \begin{align*}
    (1-q_m) + q_m Q &\ge 2(2q_m - 1)Q^{q_m}.
\end{align*} We have \begin{align*}
    \frac{d}{dQ}\left(\frac{(1-q_m) + q_m Q}{Q^{q_m}}\right) &= \frac{q_m(1-q_m)(Q-1)}{Q^{q_m + 1}},
\end{align*} so that $\frac{(1-q_m) + q_m Q}{Q^{q_m}}$ achieves its minimum at $Q = 1$, from which we obtain \begin{align*}
    \frac{(1-q_m) + q_m Q}{Q^{q_m}} &\ge 1
\end{align*} for all $Q > 0.$ To conclude~\eqref{pert2finalineq} and the proposition, it suffices to observe, by Lemma~\ref{qmdecreasing}, that \begin{equation*}
    q_m \le q_3 < \frac{3}{4},
\end{equation*} so that \begin{align*}
    2(2q_m - 1) < 1.
\end{align*}
\end{proof}

\section{Max-tie reduction}

Note that the inequality~\eqref{maxconv} is homogeneous in $x$ and $y$, so we can assume that both $\sum_{i=0}^{m}x_i = \sum_{j=0}^{m}y_j = 1$. We denote \begin{align*}
    \mathbf{P}_m := \left\{x = (x_0, ..., x_m) \in [0, 1]^{m+1}: \sum_{i=0}^{m} x_i = 1\right\}.
\end{align*} The following result was proven for $m = 2$ by Becker, Ivanisvili, Krachun, and Madrid \cite{BIKM}. Their proof extends directly here. We provide it for completeness.

\begin{lemma}\label{maxtielemma} Suppose that $(x, y) \in \mathbf{P}_m^2$ minimizes \begin{align*}
    J_p(x, y) := \sum_{k=0}^{2m}\max_{i+j = k} x_i^p y_j^p
\end{align*} on $\mathbf{P}_m^2$ for a fixed $p \in (0, 1).$ Then, for each $i$, we have $x_i = 0$ or $x_i = 1$ or there exists $k$ and $i' \neq i$ such that $x_i y_{k - i} = x_{i'}y_{k-i'}.$ Similarly, for each $j$, we have $y_j = 0$ or $y_j = 1$ or there exists $k$ and $j' \neq j$ such that $x_{k-j} y_j = x_{k-j'} y_{j'}.$
\begin{proof}
Suppose that $(x, y)$ is a minimizer without the claimed property. By symmetry in $x$ and $y$, we can assume that there exists $i$ such that $x_i \in (0, 1)$ and $x_i y_{k-i} \neq x_{i'}y_{k-i'}$ for all $k = i, ..., i+m$ and $i' \neq i$. We now perturb $x$ to decrease $J_p(x, y)$ as follows. We define \begin{align*}
    x_n(t) := \begin{cases}
        (1+t)x_n &\text{   if }n\neq i \\
        x_i - t(1-x_i) &\text{   if }n = i.
    \end{cases}
\end{align*} Since $x_i \in (0, 1)$, there exists $\varepsilon > 0$ sufficiently small so that $x(t) \in \mathbf{P}_m$ for all $t \in (-\varepsilon, \varepsilon)$. For each $k$ with $x_i y_{k-i} < \max_{i' + j' = k}x_{i'}y_{j'}$, there exists another $\varepsilon > 0$ such that \begin{align*}
    \max_{i' + j' = k}x_{i'}(t)^p y_{j'}^p &= (1+t)^p \max_{i' + j' = k} x_{i'}^p y_{j'}^p.
\end{align*} Indeed, each product $x_{i'} y_{k - i'}$ with $i' \neq i$ changes by the same factor and the product $x_i y_{k-i}$ is strictly smaller. Finally, if $x_i y_{k-i} = \max_{i'+j' = k} x_{i'}y_{j'}$, then there also exists $\varepsilon > 0$ so that \begin{align*}
    x_i(t)^p y_{k-i}^p &= \max_{i' + j' = k} x_{i'}(t)^p y_{j'}^p,
\end{align*} since $x_i y_{k-i}$ is strictly larger than all other products $x_{i'} y_{k-i'}$. Taking the minimum of all $\varepsilon$'s, we obtain a neighborhood $(-\varepsilon, \varepsilon)$ on which $J_p(x(t), y)$ is concave as a function of $t$. Thus, $(x, y) = (x(0), y)$ cannot be a minimizer of $J_p$.
\end{proof}
\end{lemma}

\section{Case of $x_2 = 0$}

\begin{theorem}Let $m \in \mathbb{N}$. For every
\[
    x_0\ge x_1 \ge 0,
    \qquad
    y_0\ge y_1\ge\cdots\ge y_m \ge 0,
\]
we have
\[
    \sum_{k=0}^{m+1}
    \left(
        \max_{\substack{i+j=k\\0\le i\le1\\0\le j\le m}}
        x_i y_j
    \right)^{q_m}
    \ge
    (x_0+x_1)^{q_m}
    \left(\sum_{j=0}^{m}y_j\right)^{q_m} .
\]
\begin{proof} We may assume that $x_0 + x_1 = \sum_{j=0}^{m} y_j = 1.$ Arguing as in the proof of Lemma~\ref{maxtielemma}, such a minimizer $(x, y)$ of $\sum_{k=0}^{m+1}
\left(
    \max_{\substack{i+j=k\\0\le i\le1\\0\le j\le m}}
    x_i y_j
\right)^{q_m}$ will satisfy the condition that every $x_i, y_j$ either is in $\{0, 1\}$ or participates in a max-tie. If $x_1 = 0$, then the conclusion follows by subadditivity of $u \to u^{q_m}$. Therefore, we may assume that $x_0 = \frac{1}{1+t}, x_1 = \frac{t}{1+t},$ where $0 < t\le 1.$ Let $N := \max\{j: y_j > 0\},$ so that 
\[
    y_0,\dots,y_N>0,
    \qquad
    y_{N+1}=\cdots=y_m=0.
\]
For each \(1\le j\le N\), at least one adjacent
constraint must be active. Consequently,
\[
    \frac{y_j}{y_{j-1}}\in\{1,t\},
    \qquad 1\le j\le N.
\]

For such a sequence,
\[
    \max\{y_j,ty_{j-1}\}=y_j,
    \qquad 1\le j\le N,
\]
and therefore the desired inequality becomes
\begin{align}\label{zineqR1m}
    \sum_{j=0}^{N}y_j^{q_m}+(t y_N)^{q_m}
    &\ge
    (1+t)^{q_m}\left(\sum_{j=0}^{N}y_j\right)^{q_m}.
\end{align} Suppose first that
\[
    y_j=t^j y_0,\qquad 0\le j\le N.
\]
Then~\eqref{zineqR1m} becomes
\[
    y_0^{q_m}\sum_{k=0}^{N+1}t^{kq_m}
    \ge
    y_0^{q_m}(1+t)^{q_m}
    \left(\sum_{j=0}^{N}t^j\right)^{q_m},
\]
which is contained in Theorem~\ref{mainblockStolarsky}.

It remains to show that vector $y$ can be transformed to this geometric form without increasing
the defect. Define
\[
    \mathcal D(y_0,\dots,y_N)
    :=
    \sum_{j=0}^{N}y_j^{q_m}+(t y_N)^{q_m}
    -
    (1+t)^{q_m}\left(\sum_{j=0}^{N}y_j\right)^{q_m}.
\]
Assume that \(y_{j+1}=y_j\) for some \(0\le j<N\). Write
\[
    A=\sum_{i=0}^{j}y_i,
    \qquad
    B=\sum_{i=j+1}^{N}y_i,
\]
and
\[
    A_{q_m}=\sum_{i=0}^{j}y_i^{q_m},
    \qquad
    B_{q_m}=\sum_{i=j+1}^{N}y_i^{q_m}+(t y_N)^{q_m}.
\]
For \(\theta\in[t,1]\), define
\[
    y^{(\theta)}
    =
    (y_0,\dots,y_j,\theta y_{j+1},\dots,\theta y_N).
\]
Then
\[
    \mathcal D(y^{(\theta)})
    =
    A_{q_m}+\theta^{q_m} B_{q_m}
    -
    (1+t)^{q_m}(A+\theta B)^{q_m}.
\]

We now prove the inequality \begin{align}\label{Bqmineq}
    B_{q_m} &\ge (1+t)^{q_m} B^{q_m}.
\end{align}Let 
\[
    z_{\ell} := y_{j+1+\ell},\qquad 0\le \ell\le L,
\]where \begin{align*}
    L &= N - j - 1.
\end{align*}Then,
\[
    B = \sum_{\ell = 0}^{L}z_{\ell},
    \qquad
    B_{q_m}=\sum_{\ell = 0}^{L}z_{\ell}^{q_m} + (tz_L)^{q_m}.
\] The tail still satisfies
\[
    z_0 \ge z_1 \ge ... \ge z_L > 0,
    \qquad\frac{z_{\ell}}{z_{\ell-1}} \in \{1, t\}, \text{   }1\le \ell \le L.
\] Recall \begin{align*}
    G_L := \sum_{\ell = 0}^{L} t^{\ell}.
\end{align*} We now compare $z$ with the geometric tail with the same mass,
\[
    g_{\ell} := \frac{Bt^{\ell}}{G_L}
    \qquad
    0 \le \ell \le L.
\] We prove that $g$ majorizes $z$. Indeed, let \begin{align*}
    P_k := \sum_{\ell = 0}^{k}z_{\ell}.
\end{align*} Since $z_{\ell+1} \ge tz_{\ell}$, for any $0 \le \ell \le k \le L,$ \begin{align*}
    z_{\ell} \le t^{-(k-\ell)} z_k.
\end{align*} Therefore, \begin{equation*}
    P_k \le z_k \sum_{\ell = 0}^{k}t^{-(k-\ell)} = z_k t^{-k} G_k.
\end{equation*} On the other hand, \begin{equation*}
    B - P_k = \sum_{\ell = k+1}^{L} z_{\ell} \ge z_k \sum_{r = 1}^{L-k}t^r = z_k(G_{L-k} - 1).
\end{equation*} Combining the two inequalities yields \begin{align*}
    B - P_k &\ge P_k \frac{G_{L-k} - 1}{t^{-k} G_k}.
\end{align*} Hence, \begin{equation*}
    B \ge P_k \frac{t^{-k}G_k + G_{L-k} - 1}{t^{-k} G_k} = P_k \frac{t^{-k} G_{L}}{t^{-k}G_k},
\end{equation*} or equivalently, \begin{equation}\label{majorization}
    P_k \le \frac{G_k}{G_L} B = \sum_{\ell = 0}^{k}g_{\ell}.
\end{equation} Thus, $g$ majorizes $z$ as desired. Since $q_m < 1$, $u \to u^{q_m}$ is concave. By Karamata's inequality, \begin{equation*}
    \sum_{\ell = 0}^{L}z_{\ell}^{q_m} \ge \sum_{\ell = 0}^{L}g_{\ell}^{q_m} = \frac{B^{q_m}}{G_L^{q_m}} \sum_{\ell = 0}^{L} t^{\ell q_m}.
\end{equation*} Additionally, by~\eqref{majorization} for $k = L - 1$, we obtain \begin{equation*}
    z_L = B - P_{L-1} \ge B - \frac{G_{L-1}}{G_L}B = B \frac{t^L}{G_L}.
\end{equation*} Therefore, \begin{equation*}
    B_{q_m} = \sum_{\ell = 0}^{L} z_{\ell}^{q_m} + (tz_L)^{q_m} \ge \frac{B^{q_m}}{G_L^{q_m}}\sum_{\ell = 0}^{L+1}t^{\ell q_m}.
\end{equation*} By Theorem~\ref{mainblockStolarsky}, \begin{align*}
    \sum_{\ell = 0}^{L+1}t^{\ell q_m} &\ge (1+t)^{q_m} G_L^{q_m},
\end{align*} so that \begin{align*}
    B_{q_m} &\ge (1+t)^{q_m} B^{q_m},
\end{align*} which is~\eqref{Bqmineq}. Therefore,
\begin{align*}
    \frac{d}{d\theta}\mathcal D(y^{(\theta)})
    &=
    q_m\theta^{q_m-1}B_{q_m}
    -
    q_m(1+t)^{q_m} B(A+\theta B)^{q_m-1} \\
    &\ge
    q_m(1+t)^{q_m}
    \left[
        \theta^{q_m-1}B^{q_m}
        -
        B(A+\theta B)^{q_m-1}
    \right] \\
    &=
    q_m(1+t)^{q_m} B
    \left[
        (\theta B)^{q_m-1}
        -
        (A+\theta B)^{q_m-1}
    \right] \\
    &\ge 0,
\end{align*}
where in the last step we used $q_m < 1.$
Thus
\[
    \mathcal D(y^{(1)})\ge \mathcal D(y^{(t)}).
\]
Replacing \(y\) by \(y^{(t)}\) changes the plateau ratio
\[
    \frac{y_{j+1}}{y_j}=1
\]
into the geometric ratio
\[
    \frac{t y_{j+1}}{y_j}=t,
\]
while preserving the property that all adjacent ratios belong to
\(\{1,t\}\). Hence, each such compression removes one plateau and does not
increase the defect.

Iterating, we obtain
\[
    \mathcal D(y_0,\dots,y_N)
    \ge
    \mathcal D(y_0,ty_0,t^2y_0,\dots,t^Ny_0).
\]
The latter quantity is nonnegative by Theorem~\ref{mainblockStolarsky}, as discussed above. This completes the proof.
\end{proof}
\end{theorem}

\section{Acknowledgements}

I thank David Jerison for helpful conversations on this paper. The author was supported in part by Simons Foundation Collaboration Grant 601948 DJ. The author used ChatGPT Plus 5.4 and 5.5 during the development of this work, including for suggesting and ruling out proof strategies and assisting with calculations.

\bibliographystyle{alpha}
\bibliography{biblio.bib}

@article {BIKM,
    AUTHOR = {Becker, Lars and Ivanisvili, Paata and Krachun, Dmitry and
              Madrid, Jos\'e},
     TITLE = {Discrete {B}runn-{M}inkowski inequality for subsets of the
              cube},
   JOURNAL = {Combinatorica},
  FJOURNAL = {Combinatorica. An International Journal on Combinatorics and
              the Theory of Computing},
    VOLUME = {45},
      YEAR = {2025},
    NUMBER = {5},
     PAGES = {Paper No. 48, 25},
      ISSN = {0209-9683,1439-6912},
   MRCLASS = {11B30 (11B13)},
  MRNUMBER = {4964355},
       DOI = {10.1007/s00493-025-00180-0},
       URL = {https://doi.org/10.1007/s00493-025-00180-0},
}

@article {LeachSholanderII,
    AUTHOR = {Leach, E. B. and Sholander, M. C.},
     TITLE = {Extended mean values. {II}},
   JOURNAL = {J. Math. Anal. Appl.},
  FJOURNAL = {Journal of Mathematical Analysis and Applications},
    VOLUME = {92},
      YEAR = {1983},
    NUMBER = {1},
     PAGES = {207--223},
      ISSN = {0022-247X},
   MRCLASS = {26A24},
  MRNUMBER = {694172},
MRREVIEWER = {A.\ Lupa\c s},
       DOI = {10.1016/0022-247X(83)90280-9},
       URL = {https://doi.org/10.1016/0022-247X(83)90280-9},
}

@article {Pales,
    AUTHOR = {P\'ales, Zsolt},
     TITLE = {Inequalities for differences of powers},
   JOURNAL = {J. Math. Anal. Appl.},
  FJOURNAL = {Journal of Mathematical Analysis and Applications},
    VOLUME = {131},
      YEAR = {1988},
    NUMBER = {1},
     PAGES = {271--281},
      ISSN = {0022-247X},
   MRCLASS = {26D15},
  MRNUMBER = {934446},
MRREVIEWER = {Wo-Sang\ Young},
       DOI = {10.1016/0022-247X(88)90205-3},
       URL = {https://doi.org/10.1016/0022-247X(88)90205-3},
}

@article {BDFKK,
    AUTHOR = {Bourgain, Jean and Dilworth, Stephen and Ford, Kevin and
              Konyagin, Sergei and Kutzarova, Denka},
     TITLE = {Explicit constructions of {RIP} matrices and related problems},
   JOURNAL = {Duke Math. J.},
  FJOURNAL = {Duke Mathematical Journal},
    VOLUME = {159},
      YEAR = {2011},
    NUMBER = {1},
     PAGES = {145--185},
      ISSN = {0012-7094,1547-7398},
   MRCLASS = {11B30 (11B13 11T23 41A46 94A12)},
  MRNUMBER = {2817651},
MRREVIEWER = {Moubariz\ Z.\ Garaev},
       DOI = {10.1215/00127094-1384809},
       URL = {https://doi.org/10.1215/00127094-1384809},
}

@article {Stolarsky,
    AUTHOR = {Stolarsky, Kenneth B.},
     TITLE = {Generalizations of the logarithmic mean},
   JOURNAL = {Math. Mag.},
  FJOURNAL = {Mathematics Magazine},
    VOLUME = {48},
      YEAR = {1975},
     PAGES = {87--92},
      ISSN = {0025-570X,1930-0980},
   MRCLASS = {26A87},
  MRNUMBER = {357718},
MRREVIEWER = {V.\ Ganapathy Iyer},
       DOI = {10.2307/2689825},
       URL = {https://doi.org/10.2307/2689825},
}
\end{document}